\newtheorem{lem}{Lemma}[section]%
\newtheorem{theorem}[lem]{Theorem}%
\newtheorem{prop}[lem]{Proposition}%
\def\a{\alpha}
\def\nd{\mathrel{\bigm|\kern-.7em/}}
\def\B{\mathcal{B}}
\def\Aut{\hbox{\rm Aut}}
\def\mz{{\mathbb Z}}
\def\authorsaddresses#1{\dedicatory{#1}}
\begin{document}

\title{Tetravalent $s$-transitive graphs of order $6p^2$}

\author[ M. Ghasemi, A.A.  Talebi and    N. Mehdipoor]{ M. Ghasemi$^1$,   A.A.  Talebi $^2$ and N. Mehdipoor$^3$    }

\authorsaddresses {$^1$ Department of Mathematics, Urmia University, Urmia 57135, Iran.  email:  m.ghasemi@urmia.ac.ir\\
$^2$ Department of Mathematics, Mazandaran University, email:  a.talebi@umz.ac.ir
 \\$^3$  Department of Mathematics, Mazandaran University, email: n.mehdipour@umz.ac.ir
  }

\subjclass[2000]{20B25, 05C25} \keywords {$s$-transitive graph,
Automorphism group, Cayley graph, Covering projection, Solvable groups.}

\begin{abstract}
Let $s$ be a positive integer. A graph is $s$-transitive if its automorphism
group is transitive on s-arcs but not on $(s + 1)$-arcs.  In this paper, we study  all tetravalent  $s$-transitive graphs of order $6p^2$.
\end{abstract}
\maketitle
\section{Introduction}
In this study, all graphs considered are assumed  to be
finite, simple and connected. For a graph $X$,
 $V(X)$, $E(X)$, $A(X)$ and  $\rm {Aut(X)}$  denote its vertex set, edge
set, arc set,  and full automorphism group, respectively.  For $u, v  \in  V(X)$,  $ \{u,v\} $ denotes  the
edge incident to $ u $ and $ v$ in $X$, and  $ N_{X}(u)$ denotes the
neighborhood of $u $ in $X$, that is, the set of vertices adjacent to
$ u $ in $X$.

An $s$-arc in a graph $X$ is an ordered $(s
+1)$-tuple ($v_{0} $, $v_{1} $, . . . , $v_{s} $) of vertices of $X$
such that $v_{i-1 }$ is adjacent to $v_{i}$ for $1  \leq  i \leq
s$, and $v_{i-1}  \neq v_{i+1}$ for $1  \leq  i  <  s$; in
the other words, a directed walk of length $s$ that never includes a
backtracking. For a graph $X$ and a subgroup $G$ of $\rm Aut(X)$, $X$ is said to
be $G$-vertex-transitive, $G$-edge-transitive, or $G$-$s$-arc-transitive if $G$
is transitive on the sets of vertices, edges, or $s$-arcs of $ X$,
respectively, and $G$-$s$-regular if $G$ acts regularly on the set of
$s$-arcs of $X$.  A graph $X$ is called vertex-transitive, edge-transitive, $s$-arc-transitive, or $s$-regular
if $X$ is $\rm Aut(X)$-vertex-transitive, $\rm Aut(X)$-edge-transitive,
$\rm Aut(X)$-$s$-arc-transitive, or $\rm Aut(X)$-$s$-regular, respectively. In
particular, $1$-arc-transitive means arc-transitive, or symmetric.
A $G$-$s$-arc-transitive graph is said to be $G$-$s$-transitive if it is not $G$-$(s+1)$-arc-transitive.

Symmetric or  s-transitive graphs with small valencies have received considerable
attention in the algebraic graph theory. Tutte \cite{Tut1} initiated the investigation
of cubic s-transitive graphs by proving that there exist no cubic $s$-transitive graphs
for $s \geq 6$. Gardiner and Praeger \cite{GP,GP1} generally explored the tetravalent arc-transitive graphs.
Let $1\leq k \leq 3$ be a integer.  The classification of s-transitive graphs
of order $kp$ and of valency 3 or 4 can be obtained from \cite{C,CO,WX1}.
Feng et al.  classified cubic s-transitive graphs of order $np$ or $np^{2}$ for $n=4,6,8$ or $10$ in \cite{FK2,FK3, FK4, FKW}.  Zhou
 and  Feng  studied tetravalent s-transitive graphs of order twice a prime power in \cite{JXZYQF}.  Feng et al. \cite{FK1} studied one-regular cubic graphs of order a small number times a prime or a prime square. Tetravalent graphs of order $6p^2$, admitting a group of
automorphisms acting regularity on arcs was investigated by  Ghasemi and Spiga in \cite{Gh.S}.     Zhou et al.  \cite{Gh,GZ} classified tetravalent s-transitive graphs of order $3p^{2}$ and $4p^{2}$.  Zhou classified Tetravalent s-transitive graphs of order $4p$ in \cite{Z}.  Ghasemi and Varmazyar  classified  the  tetravalent arc-transitive graphs of order $5p^2$ in \cite{Gh.V}. The aim of this paper is to classify all tetravalent s-transitive graphs of order $6p^2$.

\section{Preliminaries}
In this section, we introduce some notations and definitions as well
as some preliminary results which will be used later in the paper.

For a regular graph $X$, use $d(X)$ to represent the valency of $X$,
and for any subset $B$ of $V(X)$, the subgraph of $X$ induced by $B$
will be denoted by $X[B]$. Let $X$ be a connected vertex-transitive
graph, and let $G\leq \Aut(X)$ be vertex-transitive on $X$. For a
$G$-invariant partition $\B$ of $V(X)$, the  quotient graph
$X_\B$ is defined as the graph with vertex set $\B$ such that, for
any two vertices $B,C\in \B$, $B$ is adjacent to $C$ if and only if
there exist $u\in B$ and $v\in C$ which are adjacent in $X$. Let $N$
be a normal subgroup of $G$. Then the set $\B$ of orbits of $N$ in
$V(X)$ is a $G$-invariant partition of $V(X)$. In this case, the
symbol $X_\B$ will be replaced by $X_N$.  If $X_N$ and $X$ have the same valency, then $X$ is
called a normal cover of $X_N$. For a positive integer $n$, denote by $\mz_n$ the cyclic group of
order $n$ as well as the ring of integers modulo $n$, by $D_{2n}$ the dihedral group of order $2n$, respectively.

Let $G$ be a permutation group on a set $\Omega$ and $\a\in \Omega$.
Denote by $G_\a$ the stabilizer of $\a$ in $G$, that is, the
subgroup of $G$ fixing the point $\a$. We say that $G$ is {\em
semiregular} on $\Omega$ if $G_\a=1$ for every $\a\in \Omega$ and
 regular if $G$ is transitive and semiregular.

In graph theory, the lexicographic product or (graph) composition $G[H]$ of graphs $G$ and $H$ is a graph such that the vertex set of $G[H]$ is the cartesian product $V(G)\times V(H)$; and
any two vertices $(x,y)$ and $(v,w)$ are adjacent in $G[H]$ if and only if either $x$ is adjacent with $v$ in $G$ or $v = x$ and $w$ is adjacent with $y$ in $H$. Clearly, if $G$ and $H$ are arc-transitive then $G[H]$ is arc-transitive.

Let $G$ be a group, and let $S \subseteq G$ be a set of group elements such that the identity element $1$ not in $S$. The Cayley graph associated with $(G,S)$ is  defined as the graph having one vertex associated with each group element,  edges $(g,h)$ whenever $hg^{-1}$ in $S$. The Cayley graph $X$ is denoted by $\rm {Cay(G, S)}$.
 The following proposition gives a characterization for
Cayley graphs in terms of their automorphism groups.

\begin{prop}{\rm \cite[Lemma~16.3]{Biggs}}\label{cayley graph}
A graph $X$ is isomorphic to a Cayley graph on a group $G$ if and
only if its automorphism group has a subgroup isomorphic to $G$,
acting regularly on the vertex set of $X$.
\end{prop}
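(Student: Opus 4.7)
The statement is the classical Cayley--Sabidussi characterization. The plan is to prove the two directions separately, keeping in mind the paper's convention that $\{g,h\}$ is an edge of $\Cay(G,S)$ iff $hg^{-1}\in S$; this dictates that the natural actions to use throughout are on the \emph{right}.

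For the ``only if'' direction, assume $X=\Cay(G,S)$. I consider the right regular representation $R\colon G\to\mathrm{Sym}(V(X))=\mathrm{Sym}(G)$, given by $R_a(g)=ga^{-1}$. A one-line check shows $R_a$ preserves adjacency: $R_a(h)R_a(g)^{-1}=(ha^{-1})(ga^{-1})^{-1}=hg^{-1}$, so $\{g,h\}$ is an edge precisely when $\{R_a(g),R_a(h)\}$ is. Hence $R(G)\le\Aut(X)$, and $R$ is injective so $R(G)\cong G$. Regularity of $R(G)$ on $V(X)$ is immediate: transitivity from $R_{g^{-1}}(1)=g$, and freeness from the observation that $R_a$ fixing any vertex forces $a=1$.

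For the converse, assume $H\le\Aut(X)$ is a subgroup isomorphic to $G$ acting regularly on $V(X)$; identify $H$ with $G$. Fix a base vertex $v_0\in V(X)$. Regularity of $G$ makes $\phi\colon G\to V(X)$, $\phi(g)=g^{-1}(v_0)$, a bijection (the inverse is chosen to match the right-action convention $hg^{-1}\in S$). Define
\[
S=\{\,s\in G\setminus\{1\}\,:\,s(v_0)\sim_X v_0\,\}.
\]
This is a legitimate connection set: $1\notin S$ because $X$ is loopless, and $S=S^{-1}$ because if $s(v_0)\sim v_0$ then applying the automorphism $s^{-1}$ yields $v_0\sim s^{-1}(v_0)$. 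That $\phi$ is a graph isomorphism from $\Cay(G,S)$ to $X$ follows from the chain $\phi(g)\sim_X\phi(h)\iff g^{-1}(v_0)\sim h^{-1}(v_0)\iff hg^{-1}(v_0)\sim v_0\iff hg^{-1}\in S$, the middle step applying the automorphism $h$ to both sides.

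There is no real obstacle: the argument is entirely formal once the conventions are pinned down. The only step requiring genuine care is the left/right bookkeeping forced by the paper's adjacency rule $hg^{-1}\in S$, which is handled uniformly by using the right regular representation in one direction and the bijection $\phi(g)=g^{-1}(v_0)$ in the other.
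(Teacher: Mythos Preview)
Your proof is correct and is the standard Sabidussi argument; the bookkeeping with the right regular action and the bijection $\phi(g)=g^{-1}(v_0)$ is handled consistently with the paper's adjacency convention $hg^{-1}\in S$. Note, however, that the paper does not actually prove this proposition: it is quoted as a preliminary result with a reference to \cite[Lemma~16.3]{Biggs}, so there is no in-paper proof to compare against---your argument is essentially the classical one found in that reference.
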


A graph
$\widetilde{X}$ is called a  covering of a graph $X$ with
projection $p:\widetilde{X}\rightarrow X$ if there is a surjection
$p:V{(\widetilde{X})}\rightarrow V(X)$ such that
$p|_{N_{\widetilde{X}}({\tilde{v}})}:{N_{\widetilde{X}}({\tilde{v}})}\rightarrow
{N_{X}(v)}$ is a bijection for any vertex $v\in V(X)$ and
$\tilde{v}\in p^{-1}(v)$. A covering $\widetilde{X}$ of $X$ with a
projection $p$ is said to be  regular (or $K$-covering) if
  there is a
 subgroup $K$ of the automorphism group ${\rm Aut( \widetilde{X})}$  such that  $K$ is semiregular on both $V(\widetilde{X})$ and $E(\widetilde{X})$ and graph
 $X$ is isomorphic to the
quotient graph $\widetilde{X}/K$, say by $h$, and the quotient map
$\widetilde{X}\rightarrow \widetilde{X}/K$ is the composition $ph$
of $p$ and $h$ (for the purpose of this paper, all functions are
composed from left to right). If $K$ is cyclic or elementary abelian
then $\widetilde{X}$ is called a  cyclic or an  elementary
abelian covering of $X$. The group of covering transformations $\rm {CT(p)}$ of $p : \widetilde{X} \rightarrow X$ is the group of all self equivalences of $p$, that is, of all automorphisms $ \widetilde{\alpha } \in {\rm  Aut(  \widetilde{X})}$ such that $p= \widetilde{\alpha}  p$. If
$\widetilde{X}$ is connected, $K$ becomes the covering transformation
group. The  fibre of an
edge or a vertex is its preimage under $p$. An automorphism of
$\widetilde{X}$ is said to be fibre-preserving if it maps a
fibre to a fibre, while every covering transformation maps a fibre
on to itself. All of fibre-preserving automorphisms form a group
called the  fibre-preserving group.  If
$|fib_{u}| = n$, we say that the covering projection $p$ is n-fold or fold number is $n$.  It is clear any covering of a bipartite graph is bipartite. The next proposition is said when a bipartite graph is a covering of a non-bipartite graph.

\begin{prop}{\rm \cite[Corollary ~3.2]{bipartite}}\label{fold}
 If $\widetilde{X}$ is a bipartite covering of a non-bipartite graph $X$,
then the fold number is even.
\end{prop}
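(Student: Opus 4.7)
The plan is to exploit the fact that closed walks can be lifted through the covering projection and that parity of their length is preserved. Since $X$ is non-bipartite, it contains a closed walk $W = v_{0}, v_{1}, \dots, v_{\ell}=v_{0}$ of odd length $\ell$ (take $W$ to be an odd cycle, for instance). I would fix such a $W$ and work with the fibre $p^{-1}(v_{0})$.

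The first step is the standard unique path lifting property for graph coverings: because the definition of $p$ forces $p|_{N_{\widetilde{X}}(\tilde{v})}$ to be a bijection onto $N_{X}(p(\tilde{v}))$, any walk in $X$ admits a unique lift in $\widetilde{X}$ once its starting vertex is prescribed. Applying this to $W$ and to each $\tilde{v}\in p^{-1}(v_{0})$ produces a unique lifted walk whose terminal vertex again lies in $p^{-1}(v_{0})$. This yields a well-defined map $\sigma:p^{-1}(v_{0})\to p^{-1}(v_{0})$, and running $W$ backwards shows $\sigma$ is a bijection.

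Next, I would analyse the cycle structure of $\sigma$. Fix $\tilde{v}\in p^{-1}(v_{0})$ and let $m$ be the length of its $\sigma$-orbit, so $\sigma^{m}(\tilde{v})=\tilde{v}$. Concatenating $m$ copies of $W$ and lifting starting at $\tilde{v}$ gives a closed walk in $\widetilde{X}$ of length $m\ell$. Because $\widetilde{X}$ is bipartite, every closed walk in $\widetilde{X}$ has even length; hence $m\ell$ is even, and since $\ell$ is odd we conclude that $m$ is even. This argument applies to every $\sigma$-orbit on $p^{-1}(v_{0})$, so all orbits have even size. Summing the orbit sizes then gives that the fold number $|p^{-1}(v_{0})|$ is even.

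I do not expect a serious obstacle here; the only point that needs care is to phrase the argument so that it applies to an arbitrary (not necessarily regular) covering, since the proposition is stated in that generality. Unique path lifting holds for all graph coverings, so the monodromy permutation $\sigma$ is defined without any hypothesis on a covering transformation group, and the parity argument above goes through.
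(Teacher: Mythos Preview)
Your argument is correct: unique path lifting gives a well-defined monodromy permutation $\sigma$ on the fibre over $v_0$, and the bipartiteness of $\widetilde{X}$ forces every $\sigma$-orbit to have even length, whence the fold number is even. There is nothing to compare against in this paper, however, since the proposition is merely quoted from \cite[Corollary~3.2]{bipartite} without proof; your self-contained monodromy argument is the standard one and works in the stated generality (no regularity of the covering is needed).
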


Let $\widetilde{X}$ be a $K$-covering of $X$ with a projection $p$.
If $\alpha$$\in$ Aut($X$) and $\widetilde{\alpha}$$\in$
Aut($\widetilde{X}$) satisfy $\widetilde{\alpha}p=p\alpha$, we call
$\widetilde{\alpha}$ a {\it lift} of $\alpha$, and $\alpha$ the {\it
projection} of $\widetilde{\alpha}$. Concepts such as a lift of a
subgroup of Aut($X$)  and the projection of a subgroup of
Aut$(\widetilde{X})$ are self-explanatory. The lifts and the
projections of such subgroups are of course subgroups in
Aut($\widetilde{X}$) and Aut($X$) respectively.

 A graph $X$ is called a
bi-Cayley graph over a group $H$ if it has a semiregular automorphism group, isomorphic
to $H$, which has two orbits in the vertex set. Given such $X$, there exist subsets $R$, $L$, $S$ of $H$ such that $R^{-1} = R$, $L^{-1} = L$, $1 \notin R \cup L$, and $X = \rm{BiCay}(H, R, L, S)$, where the latter graph is defined to have vertex set the union the right part $H_{0} = \{ h_{0} | h \in H \}$
and the left part $H_{1} = \{ h_{1} | h \in H \}$; and the edge set consists of three sets:
\begin{flushleft}
$\{ \{h_{0}, g_{0}\} | gh^{-1} \in R \}$ (right edges),\\
$\{ \{h_{1}, g_{1}\} | gh^{-1} \in L \}$  (left edges),\\
$\{ \{h_{0}, g_{1}\} | gh^{-1} \in S \}$ (spoke edges).
\end{flushleft}

If $|R| = |L| = s$, then $\rm{BiCay}(H, R, L, S)$ is said to be
an $s$-type bi-Cayley graph, and if $|L| = |R| = 0$, then $\rm{BiCay}(H, S)$ will be written for
$\rm{BiCay}(H, \emptyset, \emptyset, S)$.  Such a graph $X$ is called normal edge-transitive if the normaliser of $H$ in the full automorphism group of $X$ is transitive on the edges of $X$.
\begin{prop}[{\cite{conder1}, Lemma 3.1}]\label{bipartite}
Let $X = \rm{BiCay}(H, R, L, S)$ be a connected normal edge-transitive bi-Cayley graph over the
group $H$. Then $R = L = \emptyset$, and hence $X$ is bipartite, with the two orbits of $R(H)$ on $V (X)$ as its parts.

\end{prop}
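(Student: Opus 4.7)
The plan is to use the fact that the distinguished semiregular subgroup $R(H)$ has exactly two orbits on $V(X)$, namely the right part $H_0$ and the left part $H_1$, and to exploit the constraint this forces on its normaliser.

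First I would observe that $R(H)$ itself preserves the partition $\{H_0,H_1\}$ setwise: indeed, every element of $R(H)$ acts on $V(X)$ as right multiplication on a group element, so it sends $H_0\to H_0$ and $H_1\to H_1$. Consequently any automorphism $\alpha\in N:=N_{\Aut(X)}(R(H))$ permutes the orbits of $R(H)$; since there are only two, $\alpha$ either fixes each of $H_0,H_1$ setwise or interchanges them.

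The key structural remark is then the following dichotomy on edges. By definition, the right edges and left edges of $X=\BiCay(H,R,L,S)$ each have both endpoints in a single $R(H)$-orbit, whereas the spoke edges have one endpoint in $H_0$ and the other in $H_1$. Combining this with the previous paragraph, any $\alpha\in N$ carries the set of non-spoke edges (right and left edges together) onto itself, and likewise carries the set of spoke edges onto itself. In particular, no $\alpha\in N$ can send a right or left edge to a spoke edge.

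Now the normal edge-transitivity hypothesis says $N$ is transitive on $E(X)$, so all edges of $X$ lie in a single $N$-orbit. On the other hand, connectedness of $X$ forces the existence of at least one edge joining $H_0$ to $H_1$, i.e.\ at least one spoke edge, so $S\neq\emptyset$. Combined with the dichotomy above, this means no right or left edges can exist, i.e.\ $R=L=\emptyset$. Therefore $X=\BiCay(H,\emptyset,\emptyset,S)$ is bipartite with parts $H_0$ and $H_1$, which are precisely the two orbits of $R(H)$. The main obstacle is simply to formalise the edge-type-preservation step cleanly; everything else is a direct consequence of the semiregular action having only two orbits.
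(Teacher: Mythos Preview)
Your argument is correct. The paper does not supply its own proof of this proposition; it is quoted verbatim as a cited result from \cite{conder1}, so there is nothing in the present paper to compare against. Your line of reasoning --- that $N_{\Aut(X)}(R(H))$ permutes the two $R(H)$-orbits $H_0,H_1$ and therefore preserves the partition of $E(X)$ into spoke edges and non-spoke edges, while connectedness forces $S\neq\emptyset$ --- is exactly the standard argument and matches what is done in the source reference.
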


The following proposition is due to Praeger et al, refer to
[\cite{GP}, Theorem~1.1] and \cite{Praeger}.

\begin{prop}\label{reduction}
Let $X$ be a connected tetravalent $(G,1)$-arc-transitive graph. For
each normal subgroup $N$ of $G$, one of the following holds:
\begin{enumerate}
\item [{\rm (1)}] $N$ is transitive on $V(X)$;

\item [{\rm (2)}] $X$ is bipartite and $N$ acts transitively on
each part of the bipartition;

\item [{\rm (3)}] $N$ has $r\geq 3$ orbits on $V(X)$, the quotient
graph $X_N$ is a cycle of length $r$, and $G$ induces the full
automorphism group $D_{2r}$ on $X_N$;

\item [{\rm (4)}] $N$ has $r\geq 5$ orbits on $V(X)$, $N$ acts
semiregularly on $V(X)$, the quotient graph $X_N$ is a connected
tetravalent $G/N$-symmetric graph, and $X$ is a $G$-normal cover of
$X_N$.
\end{enumerate}
Moreover, if $X$ is also $(G,2)$-arc-transitive, then case $(3)$ can
not happen.
\end{prop}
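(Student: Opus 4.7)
The plan is a case analysis driven by a single observation about how the neighborhood of a vertex sits with respect to the $N$-orbit partition. Let $\mathcal{B}$ be the set of $N$-orbits on $V(X)$; since $N \trianglelefteq G$ and $G$ is vertex-transitive, $\mathcal{B}$ is a $G$-invariant block system whose blocks all have the same size. Write $r = |\mathcal{B}|$. The case $r = 1$ is outcome (1), so I assume $r \geq 2$.

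Fix $v \in B \in \mathcal{B}$. Because $G_v$ acts transitively on $N(v)$ (by $(G,1)$-arc-transitivity) and fixes $B$, the subsets $N(v) \cap B$ and $N(v) \setminus B$ are both $G_v$-invariant, so one of them is empty. If all four neighbors of $v$ lie in $B$, then $X[B]$ is already $4$-regular and connectedness of $X$ gives $B = V(X)$, contradicting $r \geq 2$. Hence every neighbor of $v$ lies outside $B$. Let $s$ be the number of distinct blocks meeting $N(v)$; by $G_v$-transitivity the common number $m$ of neighbors in each such block satisfies $sm = 4$, so $(s,m) \in \{(1,4),(2,2),(4,1)\}$.

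These three subcases will produce outcomes (2), (3), (4). For $s = 1$ the quotient $X_N$ has valency $1$ and is connected, so $X_N = K_2$; the two $N$-orbits form a bipartition, and $N$ is transitive on each part since $G$ is vertex-transitive and swaps the two parts. For $s = 2$ the quotient $X_N$ is a connected $2$-regular graph on which $G$ acts arc-transitively, hence $X_N$ is a cycle $C_r$ with $r \geq 3$, and the only arc-transitive vertex-transitive subgroup of $\Aut(C_r) = D_{2r}$ is $D_{2r}$ itself. For $s = 4$ the quotient $X_N$ is a connected simple $4$-regular graph, so $r \geq 5$; to obtain a normal cover I still need $N$ to act semiregularly, which I would prove as follows: for $n \in N_v$ and any neighbor $w$ of $v$, the element $n$ fixes the $N$-orbit $C$ containing $w$ setwise, and since $|C \cap N(v)| = m = 1$ we must have $nw = w$. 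Thus $N_v$ fixes $N(v)$ pointwise, and connectedness of $X$ forces $N_v = 1$. The bijection on neighborhoods then makes $X \to X_N$ a covering, which is normal because $N \trianglelefteq G$, and $G/N$ acts arc-transitively on $X_N$.

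For the moreover clause, I would assume case (3) with $X$ also $(G,2)$-arc-transitive and derive a contradiction. Writing the neighbors of $v$ as $\{u_1,u_2\} \subset C_1$ and $\{u_3,u_4\} \subset C_2$ with $C_1, C_2$ the two blocks of $\mathcal{B}$ adjacent to $B$, the stabilizer $G_{v,u_1}$ must act transitively on $\{u_2,u_3,u_4\}$ by $(G,2)$-arc-transitivity; but $G_{v,u_1}$ preserves $C_1$, so it cannot send $u_2 \in C_1$ into $\{u_3,u_4\} \subset C_2$. The main obstacle in the whole proof is simply organizing the case split cleanly; once the $G_v$-invariance of $N(v) \cap B$ is observed, each of the three subcases falls out essentially mechanically.
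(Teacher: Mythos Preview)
Your proof is correct and follows the standard block-system argument. However, the paper itself does not give a proof of this proposition at all: it is stated as a known result and attributed to Gardiner--Praeger \cite{GP} (Theorem~1.1) and Praeger \cite{Praeger}, with no argument supplied. So there is nothing in the paper to compare your proof against beyond the citation.

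For what it is worth, your argument is essentially the one that appears in those sources: partition into $N$-orbits, observe that $G_v$-transitivity on $N(v)$ forces the neighborhood of $v$ either to lie entirely in its own block (impossible by connectedness when $r\ge 2$) or to spread over $s\in\{1,2,4\}$ other blocks with equal multiplicity $m=4/s$, and then read off outcomes (2), (3), (4) from $s=1,2,4$ respectively. Two very minor remarks: in the $s=1$ case your justification that ``$N$ is transitive on each part since $G$ is vertex-transitive and swaps the two parts'' is unnecessary---each part \emph{is} an $N$-orbit, so transitivity is immediate; and in the $s=2$ case you might phrase the ``moreover'' clause more directly by noting that $(G,2)$-arc-transitivity forces $G_v$ to act $2$-transitively (hence primitively) on $N(v)$, while the block structure $\{u_1,u_2\}\cup\{u_3,u_4\}$ is a nontrivial $G_v$-invariant partition. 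Your $G_{v,u_1}$ argument is of course equivalent.
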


The following proposition characterizes the vertex stabilizer of the
connected tetravalent $s$-transitive graphs, which can be deduced
from \cite[Lemma~2.5]{LPZ}, or \cite[Proposition~2.8]{LLM}, or
\cite[Theorem~2.2]{Li-TAMS2001}.

\begin{prop}\label{4-val-sym}
Let $X$ be a connected tetravalent $(G, s)$-transitive graph. Let
$G_v$ be the stabilizer of a vertex $v\in V(X)$ in $G$. Then
$s=1,2,3,4$ or $7$. Furthermore,  $G_v$ is a $2$-group for $s=1$;
$G_v$ is isomorphic to $A_4$ or $S_4$ for $s=2$;  $G_v$ is
isomorphic to $A_4\times\mz_3$, $\mz_3\times S_4$, or $S_3\times
S_4$ for $s=3$;  $G_v$ is isomorphic to $\mz_3^2\rtimes{\rm
GL(2,3)}$ for $s=4$; and $G_v$ is isomorphic to $[3^5]\rtimes{\rm
GL(2,3)}$ for $s=7$, where $[3^5]$ represents an arbitrary group of
order $3^5$.
\end{prop}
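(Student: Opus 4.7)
The plan is to study the local action of $G_v$ on the neighbourhood $N(v)$ (of size $4$) and bootstrap from this induced permutation group to the structure of the full vertex stabiliser, along the lines of the classical Tutte--Weiss programme for bounded-valency arc-transitive graphs.

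Write $L := G_v^{N(v)}$ for the permutation group induced by $G_v$ on $N(v)$ and $K := G_v^{[1]}$ for its kernel, so $L \leq S_4$ and $G_v/K \cong L$. A routine argument shows that $G$ is $(G,2)$-arc-transitive if and only if $L$ is $2$-transitive on $N(v)$; since $A_4$ and $S_4$ are the only $2$-transitive subgroups of $S_4$, for $s \geq 2$ one has $L \in \{A_4, S_4\}$. For $s=1$, $L$ is transitive but not $2$-transitive on $N(v)$, hence $L \in \{\mz_4, V_4, D_8\}$, all $2$-groups. To deduce that $G_v$ itself is a $2$-group when $s=1$, I would argue that $K$ is a $2$-group via the descending chain $G_v \geq G_v^{[1]} \geq G_v^{[2]} \geq \cdots$ of pointwise stabilisers of balls around $v$: each successive quotient $G_v^{[i]}/G_v^{[i+1]}$ embeds into a product of copies of a point-stabiliser of $L$, hence is a $2$-group, and $G_v^{[i]} = 1$ for some $i$ by connectedness of $X$ and faithfulness of $G$ on $V(X)$.

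The harder case is $s \geq 2$, where one must both bound $s$ and identify the precise structure of $G_v$. Here I would use the amalgam method: fix an edge $\{v,w\}$ and study the pair $(G_v, G_{vw})$, together with the induced actions of $G_v$ and $G_w$ on their respective neighbourhoods. Since $L$ equals $A_4$ or $S_4$, the subgroup $G_{vw}$ acts transitively on $N(v)\setminus\{w\}$ and on $N(w)\setminus\{v\}$ (each of size $3$), and one iterates this analysis to control the higher stabilisers $G_v^{[i]}$. The central input, due to Weiss, is that for tetravalent locally $2$-transitive graphs the chain of stabilisers terminates after boundedly many steps; a careful $p$-group/coprime-action analysis of the resulting amalgam then extracts $s \leq 7$ and the seven possibilities $A_4$, $S_4$, $A_4\times \mz_3$, $\mz_3 \times S_4$, $S_3 \times S_4$, $\mz_3^2 \rtimes \mathrm{GL}(2,3)$ and $[3^5]\rtimes \mathrm{GL}(2,3)$, one per admissible value of $s$.

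The main obstacle is Weiss's finiteness theorem and the uniqueness of the exceptional amalgams for $s=4$ and $s=7$: these depend on detailed coprime-action arguments, essentially because the $3$-part of $G_v^{[1]}$ can grow (up to order $3^5$) before the amalgam is forced to terminate. Once this bound is in hand, the rest is bookkeeping on subgroups of $S_4$ and small extensions by $\mathrm{GL}(2,3)$, and on matching each admissible structure to its value of $s$.
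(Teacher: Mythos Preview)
Your outline is correct and follows the classical Tutte--Weiss/amalgam approach that underlies the results cited. However, the paper itself does not prove this proposition at all: it merely states that the result ``can be deduced from \cite[Lemma~2.5]{LPZ}, or \cite[Proposition~2.8]{LLM}, or \cite[Theorem~2.2]{Li-TAMS2001}'' and moves on. So there is nothing to compare at the level of argument --- the paper treats the proposition as a black-box citation, whereas you have sketched the actual mechanism (local action on $N(v)$, the chain of ball-stabilisers, and Weiss's bound on the amalgam) by which those cited papers establish it.
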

For a subgroup $H$ of a group $G$, denote by $C_{G}(H)$  the centralizer of  $H$  in $G$  and by $N_{G}(H)$ the normalizer of H in G.
\begin{prop}[\cite{H}, Chapter I, Theorem 4.5]\label{Normal}
 The quotient group $N_{G}(H)/C_{G}(H) $ is isomorphic to a subgroup of the
automorphism group $\rm Aut(H)$ of H.
\end{prop}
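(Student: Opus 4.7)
The plan is to realize $N_G(H)/C_G(H)$ as the image of an explicit homomorphism into $\mathrm{Aut}(H)$ and then invoke the first isomorphism theorem. The key observation is that conjugation by an element of the normalizer restricts to an automorphism of $H$, and this association is multiplicative in the conjugating element.

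First I would define a map
\[
\varphi : N_G(H) \longrightarrow \mathrm{Aut}(H), \qquad g \mapsto c_g,
\]
where $c_g : H \to H$ is given by $c_g(h) = g^{-1}hg$ for all $h \in H$. The fact that $c_g$ maps $H$ into $H$ is exactly the defining property of the normalizer $N_G(H)$, and $c_g$ is bijective on $H$ because its two-sided inverse is $c_{g^{-1}}$, which also lies in $N_G(H)$. A short check shows $c_g$ is a homomorphism on $H$, so indeed $c_g \in \mathrm{Aut}(H)$.

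Next I would verify that $\varphi$ is a group homomorphism, i.e.\ $c_{g_1 g_2} = c_{g_2} \circ c_{g_1}$ (or the other composition order, depending on convention), which is a routine calculation with the definition. The final step is to compute the kernel:
\[
g \in \ker \varphi \iff c_g = \mathrm{id}_H \iff g^{-1}hg = h \text{ for all } h \in H \iff g \in C_G(H).
\]
Applying the first isomorphism theorem then yields $N_G(H)/C_G(H) \cong \mathrm{Im}(\varphi) \leq \mathrm{Aut}(H)$, which is the desired conclusion.

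There is no genuine obstacle here, since this is the classical N/C theorem; the only point that requires a touch of care is checking that $c_g$ really lands in $\mathrm{Aut}(H)$ (as opposed to giving a map from $H$ into $G$), and this is precisely where the hypothesis $g \in N_G(H)$, rather than $g \in G$, is used.
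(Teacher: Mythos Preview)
Your argument is correct and is the standard proof of the $N/C$-theorem via the conjugation homomorphism and the first isomorphism theorem. Note that the paper does not actually give a proof of this proposition; it is merely quoted with a reference to Huppert's textbook, so there is nothing to compare against beyond observing that your argument is exactly the classical one found there.
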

The
following proposition is said    a result of the well-known classification of finite simple groups.
\begin{prop}\label{4}
\emph{\cite{G}} A non-abelian simple group whose order has at
most three prime divisors is isomorphic to one of the following groups:
\begin{center}
$\rm {A_{5}, A_{6},PSL(2,7),PSL(2,8),PSL(2,17),PSL(3,3),PSU(3,3),PSU(4,2)}$,
 \end{center}
 whose orders are $2^{2}~.~ 3 ~. ~5$, $ 2^{3} ~.~ 3^{2} ~.~ 5$, $ 2^{3} ~.~ 3 ~.~ 7$, $ 2^{3} ~.~ 3^{2} ~.~ 7$, $2^{4} ~.~ 3^{2} ~.~ 17$, $ 2^{4} ~.~ 3^{3} ~.~ 13$, $
2^{5}~.~3^{3}~.~7$, $ 2^{6} ~.~3^{4}~.~5$, respectively.
\end{prop}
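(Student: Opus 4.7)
The plan is to combine Burnside's $p^aq^b$-theorem with the Classification of Finite Simple Groups (CFSG). First, since any group whose order involves at most two distinct primes is solvable by Burnside, a non-abelian simple group must have at least three prime divisors. So assuming the order has at most three primes forces it to have exactly three, say $\{p,q,r\}$. The task then reduces to listing all non-abelian simple $K_3$-groups.

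Next I would invoke CFSG and handle the four families in turn. For alternating groups $A_n$ with $n\geq 5$, the order $n!/2$ is divisible by every prime $\ell\leq n$; once $n\geq 7$ we already pick up the four primes $\{2,3,5,7\}$, leaving $A_5$ (order $2^2\cdot 3\cdot 5$) and $A_6$ (order $2^3\cdot 3^2\cdot 5$) as the only possibilities from this family. For the $26$ sporadic groups, a one-line check against the ATLAS shows every one of them has at least four distinct prime divisors (the smallest, $M_{11}$, already has order $2^4\cdot 3^2\cdot 5\cdot 11$), so none contribute.

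The bulk of the work lies with the groups of Lie type. Here I would use \emph{Zsygmondy's theorem}: for nearly every pair $(n,q)$, the integer $q^n-1$ admits a primitive prime divisor not dividing $q^k-1$ for any $k<n$. Applied to the standard order formulas for $\PSL(n,q)$, $\U(n,q)$, the symplectic, orthogonal, and exceptional Lie type groups, this produces a new prime for each new factor $q^k-1$ appearing in the order. A short computation bounds the Lie rank and the size of $q$: high rank or large $q$ forces at least four distinct prime divisors. What remains is a finite explicit list of small-rank, small-$q$ candidates, which one factors directly; the survivors are $\PSL(2,7), \PSL(2,8), \PSL(2,17), \PSL(3,3), \U(3,3)$ and $\U(4,2)$, matching the orders tabulated in the statement (and $\PSL(2,5)\cong A_5$, $\PSL(2,9)\cong A_6$ are absorbed into the alternating case).

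The main obstacle is the Lie-type case analysis at small parameters, where Zsygmondy either fails outright (e.g.\ $(n,q)=(2,2), (6,2),$ etc.) or provides only weak bounds; in these borderline cases one has to factor the group order by hand and verify that exactly three primes occur. A secondary technical point is ensuring no exceptional Lie type group ($G_2, F_4, E_6,\ldots, {}^2B_2, {}^2G_2, {}^2F_4, {}^3D_4$) sneaks in at small parameters; direct inspection of their order formulas rules each out, since each contains at least four cyclotomic-like factors that contribute distinct primes.
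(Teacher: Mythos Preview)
Your sketch is a reasonable outline of how the classification of the $K_3$-groups is actually proved in the literature (this is essentially Herzog's argument, reproduced for instance in Gorenstein); however, the paper does not prove this proposition at all. It is stated with a bare citation to \cite{G} and used as a black box throughout the rest of the argument. So there is nothing to compare your approach against: the authors simply import the result.

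That said, a couple of remarks on your sketch itself. The overall architecture---Burnside forces exactly three primes, then run through CFSG family by family, using Zsigmondy to bound rank and field size in the Lie-type case---is correct and is indeed how the standard proof goes. The place where a referee would push back is the clause ``a short computation bounds the Lie rank and the size of $q$'': this is the entire content of the theorem, and is not short. For $\PSL(2,q)$ alone one must solve the Diophantine constraint that $q(q-1)(q+1)/\gcd(2,q-1)$ has exactly three prime divisors, which is already a nontrivial case split (and is where $q=7,8,9,17$ emerge while $q=11,13,16,19,\ldots$ are eliminated). Similarly, your treatment of the exceptional types is too breezy: ${}^2B_2(8)$ has order $2^6\cdot 5\cdot 7\cdot 13$ and ${}^2G_2(3)'\cong \PSL(2,8)$, so one does have to look at these carefully rather than dismiss them by counting cyclotomic factors. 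None of this is wrong in spirit, but if you were writing this up rather than citing it, those borderline cases would need explicit tables.
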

\begin{prop}[\cite{Gh.S}, Theorem 1.1]\label{pgh}
Let $p$ be a prime and let $X$ be a tetravalent graph of order $6p^2$
admitting a group of automorphisms acting regularly on $A(X)$. Then
one of the following holds:
\begin{description}
\item[(i)]$X$ is isomorphic to $C(2;3p^2,1)$, $C^{\pm 1}(p;6,2)$, $Y_{p,\pm 1}$, $Y_{p,\pm \sqrt{3}}$, $Z_{p,\pm\sqrt{-1}}$ or $Z_{p,\pm \sqrt{-3}}$. (see Section 3 for the definition of these graphs);
\item[(ii)] $X$ is a Cayley graph over $G$ with connection set $S$ where
\begin{description}
\item[(a)]$G=\langle x,y\mid x^{p}=y^{6p}=[x,y]=1\rangle$ and $S=\{y,y^{-1},xy,(xy)^{-1}\}$, or
\item[(b)]$G=\langle x,y,z\mid x^p=y^{3p}=z^2=[x,y]=[x,z]=1,y^z=y^{-1}\rangle$ and $S=\{xz,x^{-1}z,x^\varepsilon yz,x^{-\varepsilon}yz\}$ (here $\varepsilon^2\equiv -1\pmod p$ and $p\equiv 1\pmod 4$), or
\item[(c)]$G=\langle x,y,z\mid x^{p^2}=y^{3}=z^2=[x,y]=[x,z]=1,y^z=y^{-1}\rangle$ and
$S=\{xz,x^{-1}z,xyz,x^{-1}yz\}$, or
\item[(d)]$G=\langle x,y,z\mid x^{p^2}=y^{3}=z^2=[x,y]=[x,z]=1,y^z=y^{-1}\rangle$ and $S=\{xz,x^{-1}z,x^\varepsilon yz,x^{-\varepsilon}yz\}$  (here $\varepsilon^2\equiv -1\pmod {p^2}$), or
\item[(e)]$G=\langle x,y,z,t\mid x^p=y^{p}=z^3=t^2=[x,y]=[x,z]=[x,t]=[y,z]=[y,t]=1,z^t=z^{-1}\rangle$ and $S=\{xt,x^{-1}t,yzt,y^{-1}zt\}$;
\end{description}
\item[(iii)]$p\in \{2,3,5\}$ and $X$ is described in Section 6.
\end{description}
\end{prop}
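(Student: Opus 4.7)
The plan is to exploit the arc-regularity hypothesis, which forces $|G|=24p^2$, together with the reduction tools provided by Proposition \ref{reduction}. First I would set aside the small primes $p\in\{2,3,5\}$, where $|G|$ is small enough that a direct (possibly computer-assisted) enumeration, guided by Propositions \ref{cayley graph} and \ref{4-val-sym}, produces the sporadic examples of clause (iii). For $p\geq 7$, the bulk of the argument is structural: Sylow's theorem together with the constraints $n_p\mid 24$ and $n_p\equiv 1\pmod p$ forces the Sylow $p$-subgroup $P$ of $G$, of order $p^2$, to be normal in $G$. Non-abelian simple composition factors are ruled out at this point by Proposition \ref{4}, since for $p\geq 7$ no non-abelian simple group of the listed orders divides $24p^2$; hence $G$ is solvable.

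With $P\trianglelefteq G$ fixed, I apply Proposition \ref{reduction} to the pair $(X,P)$. The cycle alternative (3) cannot occur for $p\geq 7$ because the $P$-orbits have size dividing $p^2$, and the induced dihedral action of $G/P$ on the resulting cycle would violate the orbit-count arithmetic $|V(X)|=6p^2$. Cases (1) and (2) give a vertex-transitive (respectively, bipartite block-transitive) action of $P$, so that $X$ is a Cayley or bi-Cayley graph over an abelian group of order $p^2$ or $2p^2$; Propositions \ref{cayley graph} and \ref{bipartite} then realise $X$ explicitly, and the residual action of $G/P$, of order $24$ or $12$, pins down the possible connection sets, producing the families in clause (ii). Case (4) exhibits $X$ as a normal $P$-cover of a tetravalent $(G/P)$-arc-regular graph $X_P$ of order $6$; the only candidates are $K_{3,3}$ and the octahedron $K_{2,2,2}$ (with Proposition \ref{fold} controlling the bipartite/non-bipartite dichotomy), and pulling back via the voltage/covering formalism yields the graphs $C(2;3p^2,1)$, $C^{\pm 1}(p;6,2)$, $Y_{p,\pm1}$, $Y_{p,\pm\sqrt{3}}$, $Z_{p,\pm\sqrt{-1}}$, $Z_{p,\pm\sqrt{-3}}$ of clause (i).

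Inside clause (ii) the extension $P.(G/P)$ further splits into subcases according to whether $P\cong\mz_p\times\mz_p$ or $P\cong\mz_{p^2}$ and according to which $2$- or $3$-subgroup acts as a complement. Each subcase forces a specific presentation of $G$, and, via Proposition \ref{Normal} combined with the arc-regularity of $G$ on $X$, a connection set $S$ that is essentially unique up to the natural graph-isomorphism equivalences. The listed presentations (a)--(e) in the statement arise as the solutions of these constraints, with the parameter $\varepsilon$ appearing whenever a square root of $-1$ in $\mz_p$ or $\mz_{p^2}$ is needed to realise an order-$4$ automorphism of $P$.

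The main obstacle will be case (4): one must classify the $P$-valued voltage assignments on the order-$6$ base graph which lift the arc-regular action of $G/P$, and verify that the resulting covers exhaust and coincide with the listed families. The appearance of the parameters $\pm 1$, $\pm\sqrt{\pm 3}$, $\pm\sqrt{-1}$ is exactly the eigenvalue data of the induced $G/P$-action on $P$ that is compatible with such a lift, and checking that each parameter value actually yields a connected graph admitting the lifted arc-regular group is the delicate point. The remaining bookkeeping—matching Cayley presentations, handling when two nominally different parameters give isomorphic graphs, and confirming that cases (i)--(iii) are exclusive—is lengthy but routine once the voltage-graph machinery and Propositions \ref{cayley graph}, \ref{fold}, \ref{bipartite} are in place.
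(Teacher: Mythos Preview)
This proposition is not proved in the present paper: it is quoted verbatim from \cite{Gh.S} as a preliminary result, so there is no in-paper argument against which to compare your sketch.

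Independently of that, your outline contains a genuine gap. When you apply Proposition~\ref{reduction} with $N=P$, the Sylow $p$-subgroup of order $p^{2}$, alternatives (1) and (2) are impossible on cardinality grounds: a group of order $p^{2}$ cannot act transitively on the $6p^{2}$ vertices of $X$, nor on a bipartition half of size $3p^{2}$. Hence your derivation of the Cayley families in clause~(ii) from ``$X$ is a Cayley or bi-Cayley graph over an abelian group of order $p^{2}$ or $2p^{2}$'' cannot work---such a graph would have $p^{2}$ or $2p^{2}$ vertices, not $6p^{2}$. In the actual argument of \cite{Gh.S} the families in clause~(ii) arise instead by locating a \emph{vertex}-regular subgroup of the arc-regular group $G$ of order $6p^{2}$ and then classifying the admissible connection sets case by case according to the isomorphism type of that subgroup; this is a separate line of analysis from the covering-graph machinery that produces the families in clause~(i). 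Your treatment of case~(4) is closer in spirit to how clause~(i) is obtained, but routing all of clause~(ii) through cases (1)--(2) of Proposition~\ref{reduction} is not viable. A smaller issue: for $p\in\{7,11\}$ the conditions $n_{p}\mid 24$ and $n_{p}\equiv 1\pmod p$ still permit $n_{p}=8$ or $n_{p}=12$, so normality of $P$ (and the solvability of $G$, since for $p=7$ the order $2^{3}\cdot 3\cdot 7$ of $\mathrm{PSL}(2,7)$ divides $24p^{2}$) needs additional justification there.
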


Let $X$ be a graph and $K$ be a finite group. By $a^{-1}$ we mean the
reverse arc to an arc $a$. A voltage assignment (or $K$-voltage
assignment) of $X$ is a function $\xi : A(X) \rightarrow  K$ with the
property that $\xi(a^{-1}) = \xi(a)^{-1}$ for each arc $a
\in A(X)$. The values of $\xi$ are called voltages, and $K$ is the
voltage group. The graph $ X  \times_{{\xi}} K$  derived from a
voltage assignment $\xi: A(X)\rightarrow K$ has vertex set $V(X)
\times K$ and edge set $E(X) \times K$, so that an edge $(e, g)$ of $X$
$\times$ K joins a vertex $(u, g)$ to $(v, \xi(a)g)$ for $a = (u, v)
\in A(X)$ and $g \in K$, where $e = \{u,v\}$. Clearly, the derived graph $ X \times_{{\xi}} K$ is a covering of $X$
with the first coordinate projection $ p :  X \times_{{\xi}} K
\rightarrow  X$, which is called the natural projection. By defining
$(u, g')^{g} = (u, g'g)$ for any $g \in K$ and $(u, g') \in
V(X \times_ {\xi} K)$, $K$ becomes a subgroup of $ {\rm Aut(X \times _{\xi} K)}$ which acts semiregularly on $V( X
\times_{{\xi}} K)$. Therefore, $ X \times_{{\xi}} K$ can be viewed
as a $K$-covering. For each $u \in V(X)$ and $\{u,v\} \in E(X)$, the vertex
set $\{(u, g) | g\in K\}$ is the fibre of $u$ and the edge set $\{(u,
g) (v,\xi(a)g) | g\in K\}$ is the fibre of $\{u,v\}$, where $a = (u, v)$. The group $K$ of automorphisms of $X$ fixing every fibre setwise
is  the covering transformation group.
Conversely, each regular covering $\widetilde{X}$ of $X$ with a
covering transformation group $K$ can be derived from a $K$-voltage
assignment.  Given a spanning tree $T$ of the graph $X$, a voltage assignment
$\xi$ is said to be $T$-reduced if the voltages on the tree arcs are
the identity. Gross and Tucker in \cite{Gross} showed that every regular
covering $\widetilde{X}$ of a graph $X$ can be derived from a $T$-reduced voltage assignment $\widetilde{X}$ with respect to an
arbitrary fixed spanning tree $T$ of $X$.

\section{classification}
Let $X$ be a connected tetravalent
$s$-transitive graph  of order $6p^2$, where $p$ is prime. By  \cite{PSV}, we may suppose that $p\geq 11$.

\begin{lem}\label{10}
Let $X$ be a  edge-transitive graph,  $p$ is a prime and $N  \trianglelefteq \rm {Aut(X)}$, where $N \cong \mathbb Z_{p}$. If the quotient graph $X_{N}$ is a normal Cayley graph and has  the same valency with $X$ then $X$ is a $N$-regular covering of $X_{N}$ and $X$ is a normal Cayley graph.
\end{lem}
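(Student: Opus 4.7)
The plan is to establish the two conclusions separately; let $G := \Aut(X)$.

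For the covering statement, I would first show that $N$ acts semiregularly on $V(X)$. The same-valency hypothesis forces the projection $X \to X_N$ to restrict to a bijection on every neighborhood: for each $v \in V(X)$, the neighbors of $v$ must lie in pairwise distinct $N$-orbits, for otherwise $[v]$ would have fewer neighbors in $X_N$ than $v$ does in $X$. If some $n \in N$ fixed $v$, then $n$ would permute $N_X(v)$ while being unable to move any neighbor to a different $N$-orbit, so $n$ would fix $N_X(v)$ pointwise; connectedness of $X$ then propagates to $n = 1$. Semiregularity, together with the bijective projection on neighborhoods, certifies that $X \to X_N$ is a regular $N$-covering, which is the first conclusion.

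For the Cayley structure, let $\pi \colon G \to \Aut(X_N)$ be the induced homomorphism; since the cover is connected and $N$-regular, Gross--Tucker yields $\ker \pi = N$. Write $X_N = \Cay(H, T)$ with $H \trianglelefteq \Aut(X_N)$ regular on $V(X_N)$. The plan is to take $\widetilde{H} := \pi^{-1}(H)$ and verify that it is a normal regular subgroup of $G$, so that Proposition~\ref{cayley graph} identifies $X$ as $\Cay(\widetilde{H}, S)$ for some $S$, necessarily a normal Cayley graph since $\widetilde{H} \trianglelefteq G$. Once the inclusion $H \leq \pi(G)$ is established, the verifications are routine: $|\widetilde{H}| = |N|\cdot|H| = |V(X)|$; $\widetilde{H}$ is transitive on $V(X)$, as it contains $N$ (transitive on each fibre) and projects onto $H$ (transitive on the base); the stabilizer $\widetilde{H}_v$ projects into $H_{[v]} = 1$, so it sits in $\ker \pi = N$, and $N_v = 1$ by semiregularity; and normality of $\widetilde{H}$ in $G$ follows from $H \trianglelefteq \Aut(X_N) \supseteq \pi(G)$.

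The main obstacle is therefore the inclusion $H \leq \pi(G)$. I would exploit edge-transitivity: $\pi(G)$ inherits edge-transitivity on $X_N$, and one then applies Proposition~\ref{reduction} to the normal subgroup $M := H \cap \pi(G) \trianglelefteq \pi(G)$ acting on $X_N$. Case~(3) is excluded because the common valency of $X$ and $X_N$ exceeds two; cases~(1) and~(2) force $M$ to be transitive on $V(X_N)$, or on each part of a bipartition, which combined with $M \leq H$ and regularity of $H$ yields $M = H$; case~(4) would make $M$ a proper semiregular subgroup of $H$, and I would rule it out by exploiting the normality of $H$ in $\Aut(X_N)$ together with the $\pi(G)$-compatibility of the block system cut out by $M$, which collapses only when $M = H$.
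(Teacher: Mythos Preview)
Your covering argument is correct and runs parallel to the paper's: both deduce semiregularity of $N$ from the equal-valency hypothesis and connectedness, and this suffices for the first conclusion.

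For the normal Cayley conclusion you correctly isolate the essential obstacle, namely the inclusion $H \leq \pi(G)$, but the proposed resolution does not go through. Proposition~\ref{reduction} is stated for \emph{tetravalent $(G,1)$-arc-transitive} graphs; the lemma assumes only edge-transitivity and imposes no valency restriction, so the proposition is not available. Even waiving those hypotheses, the case analysis fails. In case~(2), $M = H \cap \pi(G)$ is transitive on each bipartition class and, being a subgroup of the regular group $H$, is semiregular; hence $|M| = |V(X_N)|/2 = |H|/2$, which gives $[H:M]=2$, not $M = H$. Your treatment of case~(4) is a one-line sketch (``collapses only when $M = H$'') with no mechanism supplied---there is nothing preventing $M$ from being a proper semiregular normal subgroup of $\pi(G)$ whose orbit partition is $\pi(G)$-invariant.

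For comparison, the paper proceeds differently: it realises $X$ as a derived voltage graph $X_N \times_\xi \mathbb{Z}_p$ and works directly with a lift $\tilde{G}$ of the regular normal subgroup $G \trianglelefteq \Aut(X_N)$, verifying transitivity, semiregularity, and normality of $\tilde{G}$ by chasing the projection $p$. Note, however, that the existence of such a lift inside $\Aut(X)$ is exactly the statement $G \leq \pi(\Aut(X))$, and the paper simply assumes it. So the obstacle you flagged is present in the paper's argument as well, merely suppressed rather than addressed. In short, your plan and the paper's share the same unjustified step; the difference is that you name it and attempt (unsuccessfully) to close it, while the paper passes over it in silence.
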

\begin{proof}
Let $N$ be a normal subgroup of $A:=\rm {Aut(X)}$ and  $X_{N}$ be the quotient graph of $X$ with respect to the orbits of $N$ on $V(X)$. Assume that  $K$ is the kernel of $A$ acting on $V(X_{N})$. The stabilizer $K_{v}$ of $v \in V(X)$ in $K$ fixes the neighborhood of $v$ in $X$. The
connectivity of $X$ implies $K_{v}=1$ for any $v \in V(X)$ and hence  $N_{v}=1$.  If $N_{\{ \alpha,\beta\}} \neq 1$ then $N_{\{ \alpha,\beta\}}=N$. Since $X$ is  connected, there is a $\{ \beta, \gamma\} \in E(X)$ where $\beta,\gamma \in V(X)$. Then we have $g \in A$ such that $\{ \alpha,\beta\}=\{ \beta, \gamma\}^{g}$ because $X$ is an edge-transitive graph.  Hence $N_{\{ \alpha,\beta\}}=N_{\{ \beta, \gamma\}^g}= g^{-1}N_{\{ \beta, \gamma\}}g= N_{\{ \beta, \gamma\}}$. It is a contradiction and so $N_{\{ \alpha,\beta\}}=1$. Therefore $X$  is a $\mathbb Z_{p}$-regular covering of $X_{N}$. Now we prove that $X$ is a Cayley graph.
Let $X_{N}\cong \rm{Cay(G,S)}$, $X\cong X_{N} \times_{\xi}\mathbb Z_{p}$ where $\xi$ is the $T$-reduced voltage assignment  and $\tilde{G}$ is a lift of $G$ such that $\tilde{\alpha}p=p\alpha$ where $p: X \rightarrow X_{N}$ is regular covering projection, $\alpha \in \rm {Aut(X_{N})}$ and $\tilde{\alpha} \in A$.  For any $(x,k) , (y,k') \in V(X)$ where $k, k' \in \mathbb Z_{p}$ and $x,y \in V(X_{N})$, we have $\alpha \in \rm {Aut(X_{N})}$ such that $x^{\alpha}=y$. For $k^{''} \in \mathbb Z_{p}$, $(x,k)^{\tilde{\alpha}p}=(z, k^{''})^{p}=z$ where $(x,k)^{\tilde{\alpha}}=(z, k^{''})$. Also $(x,k)^{p\alpha} =x^{\alpha}=y$. Then $y=z$ and hence $(y,k), (y, k^{''}) \in p^{-1}(y)$. Therefore $\tilde{G}$ is transitive on $V(X)$. Now, we prove that $\tilde{G}$ is semiregular. Suppose that $(x,k)^{\tilde{\alpha}}=(x,k)$.   Now, since $G$ is semiregular and $\tilde{\alpha}p=p\alpha$, it implies that $x=(x,k)^{\tilde{\alpha}p}=(x,k)^{p\alpha}=x^{\alpha}$. Then $\alpha=1$ and hence $\tilde{\alpha}p=p$. Therefore $\tilde{\alpha} \in \rm{CT(p)}=\mathbb Z_{p}$ and since $\rm{CT(p)}$ is semiregular, it  follows that $\tilde{\alpha}=1$.

Assume that $X_{N}$ is a normal  Cayley graph and $G \trianglelefteq \rm {Aut(X_{N})}$. Let  $\tau \in \rm {Aut(X_{N})}$ and $\tilde{\tau}$ is a lift $\tau$. First, we prove that $\tilde{\tau}^{-1}p=p\tau^{-1}$. Suppose that $\tilde{x}, \tilde{y} \in V(X)$ and $\tilde{x}\tilde{\tau}^{-1}=\tilde{y}$. Since $(\tilde{y})p\tau=(\tilde{y})\tilde{\tau}p= \tilde{x}p$, we have $\tilde{y}p=\tilde{x}p\tau^{-1}$ and hence $\tilde{x}\tilde{\tau}^{-1}p=\tilde{x}p\tau^{-1}$. Then $\tilde{\tau}^{-1}p=p\tau^{-1}$. Suppose that $g \in G$, $\tilde{g} \in \tilde{G}$ and $\tilde{g}$  is a lift of $g$. Then $\tilde{\tau}^{-1}\tilde{g}\tilde{\tau}p=\tilde{\tau}^{-1}\tilde{g}p\tau=\tilde{\tau}^{-1}pg\tau=p\tilde{\tau}^{-1}g\tau=
p\tilde{\tau}^{-1}g\tilde{\tau}$.  Thus $\tilde{\tau}^{-1}\tilde{g}\tilde{\tau} \in \tilde{G}$. Therefore $\tilde{G} \trianglelefteq \rm {Aut(X)}$ and hence $X$ is a normal Cayley graph.
\end{proof}
\begin{lem}\label{degree}
Let $X$ be a  connected graph and $N  \leq \rm {Aut(X)}$ and $p: X \rightarrow X/N$ be the
corresponding quotient projection (that is $p(x)= x^ N$ for $x \in V(X) \cup A(X)$). If $N$ is semiregular then for every $v \in V (X)$, the valency of $v$ equals the valency of $p(v)$.
\end{lem}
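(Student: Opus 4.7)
The plan is to establish, via the projection $p$, a bijection between the arcs of $X$ with tail $v$ and the arcs of $X/N$ with tail $p(v)=v^N$; since the valency of a vertex in either graph is the cardinality of its out-arc set, the conclusion follows at once.

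Let $A_X^+(v)$ denote the set of arcs of $X$ of the form $(v,u)$, so $|A_X^+(v)|$ equals the valency of $v$. Since the $N$-action commutes with the source map, $p$ restricts to a well-defined map $\phi : A_X^+(v) \to A_{X/N}^+(v^N)$ sending $(v,u) \mapsto (v,u)^N$, where $A_{X/N}^+(v^N)$ is the set of arcs of the quotient with tail $v^N$. I will show $\phi$ is bijective. For injectivity, suppose $(v,u_1)^g = (v,u_2)$ for some $g \in N$; then in particular $v^g = v$, and semiregularity of $N$ forces $g = 1$, whence $u_1 = u_2$. For surjectivity, pick any arc of $X/N$ with tail $v^N$, and represent it by some $(w,u) \in A(X)$ with $w \in v^N$, say $w = v^g$; then $(w,u)^{g^{-1}} = (v, u^{g^{-1}}) \in A_X^+(v)$ lies in the same $N$-orbit, so $\phi$ sends it to the given arc.

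No serious obstacle arises. The only delicate point is the interpretation of valency in $X/N$ in a way compatible with the fact that $p$ is defined on arcs as well as on vertices: it must be read as the number of $N$-orbits of arcs with source lying in $v^N$. Under this reading, semiregularity supplies precisely the two ingredients needed, namely triviality of the stabilizer $N_v$ for the injective direction and transitivity of $N$ on the orbit $v^N$ for the surjective direction.
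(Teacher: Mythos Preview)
Your argument is correct and follows essentially the same route as the paper: the decisive step in both is that if two arcs $(v,u_1)$ and $(v,u_2)$ lie in the same $N$-orbit then semiregularity forces the connecting element to be trivial, hence $u_1=u_2$. The paper phrases this as a contradiction showing $|N_X(v)\cap O|\le 1$ for every $N$-orbit $O$, whereas you package it as the injectivity half of an explicit bijection $A_X^+(v)\to A_{X/N}^+(v^N)$ and add the (easy) surjectivity check; the mathematical content is identical.
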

\begin{proof}
Suppose that $p: X \rightarrow X/N$ is the
corresponding quotient projection. If $|N_{X}(v)\cap O| = 0$ or $1$ for  any   $N$-orbit $O$  and  $ v \in V(X \setminus O)$ then valency of $v$ equals the valency of $p(v)$. Now assume that
$|N_{X}(v)\cap O| > 1$ and   $(v, v_{1}), (v, v_{2}) \in A(X)$ such that $v_{1}, v_{2} \in O$. Since  $p((v, v_{1}))=(v^N,v_{1}^N ), p((v, v_{2}))=(v^N,v_{2}^N )$ and $v_{1}^N=v_{2}^N$, we have $ (v, v_{1}), (v, v_{2})$ belong to a fiber and hence there is $n \in N $ such that $(v, v_{2})= (v, v_{1})^n$. Thus $v^n=v$ and  $v_{2}= v_{1}^n$. This is a contradiction because $N$ is semiregular  for every $v \in V (X)$.
\end{proof}
\begin{lem}\label{lemma-a}
Let $p$ be a prime, and let $X$ be a connected tetravalent
$s$-transitive graph  of order $6p^2$, where $p\geq 11$. If $G \leq {\rm Aut}(X)$ is transitive on the arc set of $X$, then every minimal normal subgroup of $G$ solvable.
\end{lem}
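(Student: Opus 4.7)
The plan is to assume, for contradiction, that $G$ has a non-solvable minimal normal subgroup $N$, and derive an inconsistency from the arithmetic of $|V(X)| = 6p^2$ together with Propositions~\ref{4-val-sym}, \ref{reduction}, and \ref{4}. As a minimal normal subgroup, $N \cong T^k$ for a fixed non-abelian simple group $T$. By Proposition~\ref{4-val-sym} every vertex stabilizer $G_v$ has order of the form $2^a 3^b$, so the prime divisors of $|G| = 6p^2|G_v|$, and hence of $|T|$, lie in $\{2,3,p\}$. Since a non-abelian simple group has at least three distinct prime divisors (by Burnside's $p^aq^b$-theorem), the primes of $|T|$ are exactly $\{2,3,p\}$; combined with $p \ge 11$, Proposition~\ref{4} leaves only $(p,T) \in \{(13,\PSL(3,3)),(17,\PSL(2,17))\}$. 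A Sylow-$2$ count, $|N|_2 = 2^{4k} \le |G|_2 \le 2 \cdot 2^4 = 2^5$, then forces $k = 1$, so $N \cong T$.

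Next I would apply Proposition~\ref{reduction} to the action of $N$ on $V(X)$. Case~(1) requires $6p^2 \mid |T|$, which fails since $p^2 \nmid |T|$ in both candidates. Case~(2) fails similarly with $3p^2$ in place of $6p^2$. Case~(4) requires $|T| \mid 6p^2$, which fails since $|T| > 6p^2$ in both candidates.

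The main obstacle is case~(3), where $X_N$ is a cycle of length $r \ge 3$, $G$ induces the full group $D_{2r}$ on $X_N$, and $N$ lies in the kernel $K$ of this action. I would first argue that the four neighbors of $v$ split as $2+2$ according to which of the two $N$-orbits adjacent to $O_v$ they lie in: the $0+4$ split is excluded because $N$-transitivity on $O_v$ together with both $\{O_v,O_L\}$ and $\{O_v,O_R\}$ being edges of $X_N$ forces each of $O_L, O_R$ to meet $N(v)$ non-trivially, and asymmetric splits are excluded by $G_v$-transitivity on $N(v)$ combined with $G_v$ preserving $\{O_L, O_R\}$. Indeed the image of $G_v$ in $G/K \cong D_{2r}$ lies in the $D_{2r}$-stabilizer of $O_v$, a subgroup of order $2$ that fixes or swaps $\{O_L, O_R\}$, so $G_v$ preserves the $2+2$ partition of $N(v)$. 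Consequently the image of $G_v$ in $\mathrm{Sym}(N(v))$ lies in $S_2 \wr S_2 \cong D_8$, which is not $2$-transitive on four points; by Proposition~\ref{4-val-sym} this forces $s=1$, so $G_v$, and hence $N_v$, is a $2$-group. But $|N_v| = |T|\,r/(6p^2)$ has a nontrivial $3$-part, since $|T|_3 \in \{9, 27\}$ while $(6p^2)_3 = 3$ and $r \mid 6p^2$. This contradiction eliminates case~(3) and completes the proof.
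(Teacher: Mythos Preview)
There is one genuine gap in your argument. The Sylow-$2$ bound $|G|_2 \le 2\cdot 2^4 = 2^5$ requires $|G_v|_2 \le 2^4$, and Proposition~\ref{4-val-sym} guarantees this only for $s \ge 2$; when $s=1$ the stabilizer $G_v$ is an arbitrary $2$-group with no a priori bound on its order (graphs such as $C_n[2K_1]$ show it can be arbitrarily large). Without $k=1$ secured, your elimination of cases~(1) and~(2) of Proposition~\ref{reduction} breaks down: for $k\ge 2$ one has $p^2\mid |T|^k$, so the divisibility conditions $6p^2\mid |N|$ and $3p^2\mid |N|$ are no longer obstructions.

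The repair is short and is exactly what the paper does: dispose of $s=1$ first. If $s=1$ then $G_v$ is a $2$-group, so $|G|_3=3$; but both remaining candidates satisfy $|T|_3\ge 9$, so $T^k$ cannot sit inside $G$ for any $k\ge 1$, and the case is vacuous. With $s\ge 2$ in hand, your bound $|G|_2\le 2^5$ is valid, $k=1$ follows, and the remainder of your proof goes through. Note also that once $s\ge 2$ is known, case~(3) is eliminated directly by the ``Moreover'' clause of Proposition~\ref{reduction}; your $D_8$ argument is a correct re-derivation of that clause (followed by the same $3$-part contradiction you would have used to kill $s=1$ up front), so it is sound but longer than necessary.
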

\begin{proof} Suppose that $G$ is arc-transitive on $X$ and $v\in V(X)$. By
Proposition~\ref{4-val-sym}, $G_v$ either is a $2$-group or has
order dividing $2^4\cdot 3^6$. It follows that $|G| \mid 2^5~.~3^7~.~p^2$ or $|G|=2^{m+1}~.~3~.~p^2$ for some integer $m$. Let $N$ be a minimal normal subgroup of $G$. Suppose that $N$ is non-solvable.  First suppose that $|G| \mid 2^{m+1}~.~3~.~p^2$. Also let $N \cong T_1 \times T_2 \cdots \times T_n$, where $T_i \cong T_j$ and $1\leq i, j \leq n$. Since $3 \mid |G|$ and $3 ^2 \nmid |G|$ then we conclude that $N \cong T$, where $T$ is isomorphic to one of the  groups in Proposition  \ref{4}. By considering the order of  these groups and since $p \geq 11$ we get a contradiction.
Now suppose that $|G| \mid 2^5~.~3^7~.~p^2$.  Thus $X$ is 2-arc-transitive. By considering the orders of  groups in Proposition  \ref{4},   we may suppose that $ N \cong$ PSL(2,17)  or PSL(3,3). Let $X_{N}$ be the quotient graph of $X$ with respect to the orbits of $N$ on $V(X)$. By Proposition \ref{reduction},  $d(X_{N})= 2$ or $4$.  If $d(X_{N})= 4$ then  $N_v=1$ for every $v \in V (X)$. By considering $|N|$ and $N$-orbit $|v^N|$, a contradiction can be obtained. If $d(X_{N})= 2$ then we  consider the    Proposition \ref{reduction}. Since   $X$ is  (G, 2)-arc-transitive, then case (3) of Proposition \ref{reduction} can not happen. Hence we get a contradiction.

\end{proof}

\begin{center}
${  \left(\begin{array}{cccccc}
  \hline
  Graph & order & s-transitive   \\
  \hline
  C[24,1] & 6~.~2^2 & 1 \\
  C[24,2] & 6~.~2^2 & 1 \\
  C[24,3] & 6~.~2^2 & 1 \\
 C[24,4] & 6~.~2^2 & 1 \\
 C[24,5] & 6~.~2^2 & 1 \\
 C[24,6] & 6~.~2^2 & 1 \\
 C[54,1] & 6~.~3^2 & 1 \\
  C[54,2] & 6~.~3^2 & 1 \\
  C[54,4] & 6~.~3^2 & 2 \\
  C[54,5] & 6~.~3^2 & 1 \\
  C[54,6] & 6~.~3^2 & 1 \\
  C[150,1] & 6~.~5^2 & 1 \\
  C[150,2] & 6~.~5^2 & 1 \\
  C[150,3] & 6~.~5^2 & 1 \\
  C[150,4] & 6~.~5^2 & 1 \\
  C[150,5] & 6~.~5^2 & 1 \\
  C[150,6] & 6~.~5^2 & 1 \\
  C[150,7] & 6~.~5^2 & 1 \\
  C[150,8] & 6~.~5^2 & 1 \\
  C[150,10] & 6~.~5^2 & 1 \\
  C[150,11] & 6~.~5^2 & 1 \\
  C[294,1] & 6~.~7^2 & 1 \\
  C[294,2] & 6~.~7^2 & 1 \\
  C[294,3] & 6~.~7^2 & 1 \\
  C[294,5] & 6~.~7^2 & 1 \\
  C[294,8] & 6~.~7^2 & 1 \\
  C[294,10] & 6~.~7^2 & 1 \\
  C[294,11] & 6~.~7^2 & 1 \\
  \hline
 \end{array}
 \right) }$\\
 Table 1: s-transitive graphs of order $6p^2$ with $p<11$ \end{center}
\begin{theorem}
Suppose that $X$ is a tetravalent $s$-transitive graph of order $6p^2$, where $p$ is a prime.  Then either $X$  is 1-regular or  $X$  is isomorphic to 1-transitive graph $C_{3p^2}[2K_1]$ or  it is isomorphic to one of the graphs in Table 1.\end{theorem}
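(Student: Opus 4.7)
The plan is to reduce to $p \geq 11$ (for $p < 11$ one invokes the computer-assisted results summarized in Table~1, in the spirit of \cite{PSV}), set $G := \mathrm{Aut}(X)$, and prove that if $X$ is not $1$-regular then $X \cong C_{3p^2}[2K_1]$.

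First, I would pick a minimal normal subgroup $N \trianglelefteq G$. Proposition~\ref{4-val-sym} bounds $|G_v|$, so $|G| = 6p^2 |G_v|$ has prime divisors only in $\{2,3,p\}$. Lemma~\ref{lemma-a} then forces $N$ to be elementary abelian, say $N \cong \mz_q^k$; a short arithmetic check using $|V(X)| = 6p^2$ restricts the possibilities to $|N| \in \{2, 3, p, p^2\}$. I would then apply Proposition~\ref{reduction}. Cases (1) and (2) require $|N| \geq 6p^2$ or $|N| \geq 3p^2$ respectively, and so are incompatible with $p \geq 11$ (the only borderline is $|N| = p^2$ in case (1), which still fails since $p^2 < 6p^2$). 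Thus only cases (3) and (4) of Proposition~\ref{reduction} remain, and if $X$ is $(G,2)$-arc-transitive case (3) is already excluded by the moreover clause.

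In case~(4), $N$ is semiregular with at least $5$ orbits and Lemma~\ref{degree} gives that $X_N$ is again tetravalent; moreover $X$ is a normal $N$-cover of a $G/N$-arc-transitive graph $X_N$ of order $6p^2/|N| \in \{6, 6p, 2p^2, 3p^2\}$. Invoking the classifications cited in the introduction --- order $3p^2$ by \cite{Gh,GZ}, order $2p^2$ by \cite{JXZYQF}, order $6p$ from the classification results of \cite{C,CO,WX1} extended to valency $4$, and the single graph $K_{2,2,2}$ in order $6$ --- I would reconstruct each admissible cover using the $T$-reduced voltage assignment formalism of Gross--Tucker: the lifting criterion for the arc-transitive subgroup imposes linear conditions on the voltages, and combining these with Lemma~\ref{10} and Proposition~\ref{pgh} one checks that each reconstructed cover is either $1$-regular or falls into the arc-regular list of Proposition~\ref{pgh} (which, when produced in this way, carries trivial vertex stabilizer and hence is also $1$-regular). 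In case~(3), $X_N$ is a cycle of length $r$, fibers have common size $k = 6p^2/r$, and the bipartite subgraph between two consecutive fibers is $d$-regular with $2d = 4$ by arc-transitivity; since any intra-fiber edge would push the valency above $4$, the fibers are independent sets, forcing $k = 2$ and $r = 3p^2$, whence $X \cong C_{3p^2}[2K_1]$.

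I expect the main obstacle to be case~(4) with $|N| = p^2$: enumerating $\mz_p^2$-covers of $K_{2,2,2}$ requires computing the action of $\mathrm{Aut}(K_{2,2,2}) \cong S_4 \times \mz_2$ on the voltage space $\mz_p^2$ and verifying via the lifting criterion that no arc-transitive lift produces an $s$-transitive graph outside the $1$-regular column. A secondary technical point is the bipartite/non-bipartite distinction, which must be tracked using Proposition~\ref{fold} to ensure an even fold number whenever $X$ is bipartite but $X_N$ is not. Once these are handled and each branch of the case analysis contributes either to the $1$-regular column or to the single graph $C_{3p^2}[2K_1]$, concatenating the outcomes yields the trichotomy asserted in the theorem.
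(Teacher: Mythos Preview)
Your overall architecture---reduce to $p\geq 11$, take a minimal normal subgroup $N$ of $A=\Aut(X)$, and split along Proposition~\ref{reduction}---matches the paper. But your handling of case~(3) contains an actual error, and your case~(4) sketch underestimates the work in two subcases.

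\medskip

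\textbf{The case~(3) gap.} You write that independence of the fibres ``forces $k=2$ and $r=3p^2$''. This is a non sequitur: nothing prevents $k=3$. Indeed, when $N$ is an elementary abelian $3$-group the orbits have size $3$, so $r=2p^2$ and the bipartite graph between consecutive fibres is $C_6$, not $K_{2,2}$. In that situation $X$ is \emph{not} $C_{3p^2}[2K_1]$. The paper disposes of this branch by an order argument: the kernel $K$ of the action on $X_N$ acts faithfully on a fibre of size $3$, so $|K|\leq 6$, and since $A/K\cong D_{4p^2}$ one gets $|A|\leq 24p^2$, forcing $X$ to be $1$-regular. Your proof needs an argument of this type; you cannot funnel every instance of case~(3) into the lexicographic product.

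\medskip

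\textbf{Case~(4) is heavier than you indicate.} Two subcases in particular require machinery you do not mention.

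First, for $|N|=3$ the quotient $X_N$ has order $2p^2$, and by \cite{JXZYQF} one possibility is $X_N\cong C_{p^2}[2K_1]$ with $|\Aut(X_N)|=2^{p^2+1}\cdot p^2$. No order bound helps here. The paper instead produces a semiregular cyclic subgroup of order $3p^2$ in $A$, recognises $X$ as a bi-Cayley graph over $\mz_{3p^2}$, uses Proposition~\ref{bipartite} and Proposition~\ref{fold} to exclude the $0$-type, and then identifies the $2$-type as a rose-window graph $R_{3p^2}(2,1)\cong C_{3p^2}[2K_1]$ via \cite{KKM}. Your voltage-assignment plan gives no hint of this route.

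Second, for $|N|=p$ the quotient $X_N$ has order $6p$; the references you cite (\cite{C,CO,WX1}) cover orders $p$, $2p$, $3p$, not $6p$. The relevant classification is \cite{gh10}, which says the only non-$1$-regular possibility is $X_N\cong C_{3p}[2K_1]$, again with an automorphism group of exponential size. The paper then needs a second layer of analysis: it studies $C_A(N)$, takes a further minimal normal subgroup $T/N$ of $A/N$, rules out $T/N\cong\PSL(2,17)$ or $\PSL(3,3)$ via Schur multipliers, and finally quotients $X_N$ once more by $T/N$ to reach graphs of order $3p$, $2p$, or $6$ where order bounds finish the job. A single pass with voltage assignments over $\mz_p$ does not replace this, because you do not yet know $X_N$ and its automorphism group is too large for the crude bound $|A|\leq |N|\cdot|\Aut(X_N)|$ to force $1$-regularity.
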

\begin{proof}
 Suppose that $A={\rm Aut}(X)$ and   $N$ is a minimal normal subgroup of $A$. Also suppose that  $\Omega$ is the set of orbits of $N$ on $V(X)$  and $K$ be the kernel of the action $A$ on $\Omega$. If $X$ is $1$-regular then  $X$ is isomorphic to one of the graphs in  Proposition \ref{pgh}. Thus we may suppose that $X$ is not $1$-regular.  By Lemma \ref{lemma-a}, $N$ is an elementary abelian $r$-group, where $r \in \{2, 3, p\}$. First suppose that $N$ is an elementary abelian $2$-group and $\Omega=\{\Delta_0, \Delta_1, \Delta_2, \cdots  ,\Delta_{3p^2-1}\}$, where the subscripts are taken modulo $3p^2$.  Then $|X_N|=3p^2$ and by Proposition \ref{reduction}, $d(X_N)=2$ or $4$.  First suppose that $d(X_N)=2$. We know that   $\Delta_i$ does not have an edge then $X \cong C_{3p^2}[2K_1]$.
 Hence we may suppose that $d(X_N)=4$. Therefore $K=N \cong \mathbb{Z}_2$ and $X_N$ is $A/N$-symmetric graph. By [\cite{Gh}, Theorem 3.3],  $X_N$ is $1$-regular and so $|{\rm Aut}(X_N)|=2^2~.~3~.~p^2$. Thus $|A/K| \mid 2^2~.~3~.~p^2$ and hence  $|A| \mid 2^2~.~3~.~p^2$. Therefore  $X$ is $1$-regular, a contradiction. Now suppose that $N$ is an elementary abelian $3$-group and $\Omega=\{\Delta_0, \Delta_1, \Delta_2, \cdots  ,\Delta_{2p^2-1}\}$, where the subscripts are taken modulo $2p^2$.  Then $|X_N|=2p^2$ and by Proposition \ref{reduction}, $d(X_N)=2$ or $4$.  First suppose that $d(X_N)=2$.  We know that $\Delta_i$ does not have an edge then it is easy to see  that $K$ acts faithfully on  $\Delta_i$. Thus $|K| \leq S_3$ and since $A/K \cong D_{4p}$ we conclude that $|A|\leq 24p^2$. Therefore $X$ is $1$-regular, a contradiction. Thus we may suppose that $d(X_N)=4$ and so $K \cong  N \cong \mathbb{Z}_3$. Now by [\cite{JXZYQF}, Theorem 3.3]  either $|X_N|=8$ or $|{\rm Aut(X_N)}| \in \{2^{p^2+1}~.~p^2, 2^3~.~p^2, 2^4~.~p^2\}$. If $|X_N|=8$ then $p=2$, a contradiction. Also if  $|{\rm Aut(X_N)}|=2^3~.~p^2$  then $X$ is $1$-regular, a contradiction. Thus we may suppose that $|{\rm Aut(X_N)}|= 2^{p^2+1}~.~p^2$, or $2^4~.~p^2$.  If $|{\rm Aut(X_N)}|= 2^4~.~p^2$ then by [\cite{JXZYQF}, Proposition 2.5]  $X_N$ is normal Cayley graph on group $G\cong \mathbb{Z}_{2p} \times \mathbb{Z}_{p}$ of order $2p^2$. Also we know that $K=N$ and $A/K$ acts transitive on $V(X_N)$.  Thus $8p^2 \mid |A/K|$. If $|A/K|=8p^2$ then $|A|=24p^2$ and so $X$ is $1$-regular, a contradiction. Thus we may suppose that $|A/K|>8p^2$ and so $A/K={\rm Aut}(X_N)$.  Now since $G \leq A/K$ we conclude that $G=T/K$, where  $T \leq A$. Now $|T|=6p^2$ and $T$ acts regularly on $V(X)$. Now by Lemma \ref{10} $X$ is a normal Cayley graph on $T$, and hence   $T$ is generated by two  elements of same order. Since  $|T'|=1$ or $3$, by considering  the classification of groups of order $6p^2$ [\cite{Eick}, Table 4], $T \cong \mathbb{Z}_{p} \times \mathbb{Z}_{6p},~D_6 \times \mathbb{Z}_{p^{2}}$ or $\mathbb{Z}_{6p^2}$. By [\cite{Gh.S}, section 4.1,  4.5, 4.6], tetrvalent normal Cayley graphs over $T$ are 1-regular.  Also if  $|{\rm Aut(X_N)}|= 2^{p^2+1}~.~p^2$ then by  [\cite{JXZYQF}, Example 2.1], $X_N \cong C_{p^2}[2K_1]$ and $X_N={\rm Cay}(G, S)$, where $G=\mathbb{Z}_{p^2} \times \mathbb{Z}_{2}=\langle a \rangle \times \langle b \rangle$ and $S=\{a, a^{-1}, ab, a^{-1}b\}$.
 Since $A_v$ is $\{2, 3\}$-group we conclude that $A$ has a semiregular element of order $p^{2}$, say $\alpha$. Since $N_A(N)/C_A(N)$ is a subgroup of ${\rm Aut}(N) \cong \mathbb{Z}_{2}$ we conclude that $\alpha \in C_A(N)$. Also if $N=\langle \beta \rangle$ then $\langle \alpha, \beta \rangle$ is a semiregular subgroup of ${\rm Aut}(X)$ which  is isomorphic to $\mathbb{Z}_{3p^2}$. Thus  $X={\rm BiCay}(H,R,L,S)$,  for some suitable subsets $R$, $L$ and $S$ of  $H \cong \mathbb{Z}_{3p^2}=\langle c \rangle$.

We should note that $X$ is either $0$-type or $2$-type. First suppose that $X$ is $0$-type. Then by Proposition \ref{bipartite}, $X$ is a bipartite bi-Cayley graph on $S$.  Since $X$ is $N$-covering of $C_{p^2}[2K_1]$ and $C_{p^2}[2K_1]$ for $p >2$ is non-bipartite, we get a
contradiction by Proposition \ref{fold}.  Thus we may suppose that $X$ is $2$-type. Then $|R|=|L|=|S|=2$. Since, by [\cite{AT}, Lemma 3.1(3)], ${\rm BiCay}(H,R,L,S)\cong {\rm BiCay}(H,xR^\alpha,xL^\alpha,xS^\alpha)$ for each $x\in H$ and $\alpha\in\Aut(H)$, we may assume that $R=\{c,c^{-1}\}$, $L=\{c^i,c^{-i}\}$ and $S=\{1,c^j\}$ for some $1\leq i, j\leq 3p^2-1$, which means that $X$ is a rose-window graph. Since $X$ is a edge-transitive  graph, by \cite[Corollary 1.3]{KKM}, $X \cong R_{3p^2}(2, 1)$.  Also, by  section 3 of \cite{KKM},  this graph is isomorphic to $C_{3p^2}[2K_1]$.

  Now suppose that $N \cong \mathbb{Z}_p$ or $\mathbb{Z}_p \times \mathbb{Z}_p$. If  $N \cong \mathbb{Z}_p$ then $|X_N|=6p$. Since $N$ is a normal subgroup  of $A$,  we have   $d(X_N)=4$ by Lemma \ref{degree}.  If $X_N$ is $1$-regular then  $|{\rm Aut}(X_N)|=24p$ and so $|A|=24p^2$. Now $X$ is $1$-regular and it is a contradiction. Thus we may suppose that  $X_N$ is not $1$-regular. By Proposition \ref{Normal}, $A/C=N_{A}(N)/C_{A}(N) \leq \rm Aut(N)\cong \mathbb Z_{p-1}$. Now, we consider the following cases: \\
  \textbf{Case I:}  $N=C$.

  Therefore $A/N $ is a abelian group.  Also since   $K_{v}=1$ for any $v \in V(X)$,  we have $|N|=|K|$. Since $A/K$ acts transitively on $V(X_{N})$, we have $A/K$  is a regular group. Now by considering the arc-transitively  of $X$,  we get a contradiction.\\
  \textbf{Case II:}  $N < C$.

   Let $T/N $ be a minimal normal subgroup of $A/N$ and $T/N \leq C/N$.  Since $T \leq C$ and $N \leq T$, we have $N \leq Z(T)$. If $T/N $ is a non-solvable group then by Proposition \ref{4} and since  $p\geq 11$,  $T/N \cong$ PSL(2,17) or PSL(3,3). \\
   \textbf{Subcase I:}  $T=T'$.

   If $T=T'$ then then $T$ is a covering group of PSL(2, 17) or PSL(3,3). The Schur multiplier of  PSL(2, 17)   is $\mathbb{Z}_2$ and the Schur multiplier of PSL(3,3) is $1$. Since $N \cong  \mathbb{Z}_p$, it implies that $T=N \times$ PSL(2,17) or $T=N \times$ PSL(3,3). Then $T=T'=$ PSL(2,17)$'$ or  PSL(3,3)$'$ and hence $N=1$, a contradiction.\\
    \textbf{Subcase II:}  $T' <T$.

    Suppose that $T' <T$. Since  $T/N=T'N/N \cong$ PSL(2,17) or PSL(3,3), we have $T=T'N$. If $T' \cap N \neq 1$ then $T' \cap N = N $,   $N < T'$ and   hence $T=T'$, it is a contradiction. Then $T=T' \times N$ and hence $T/N\cong T' \cong$ PSL(2,17) or PSL(3,3). We know that $T'$ is characteristic in $T$ and hence normal in $A$. Consider the quotient graph $X_{T'}$. By Proposition \ref{reduction}, $X_{T'}$ has valency $2$ or $4$. If $X_{T'}$ has valency $4$, then $T'$ is semiregular and hence $T'$ is a solvable group, a contradiction. Assume that $d(X_{T'})=2$. By case (3) of Proposition \ref{reduction}, $X_{T'}$ is 1-transitive and hence $|A/N|= 2^{m+1}.3.p$, where $p\geq 11$. Then $T/N$ is solvable,  a contradiction.

Therefore  $T/N $ is a solvable group and $N < C$. Then   $T/N $  is an elementary abelian 2-,3- or p-group. Let $M:=T/N$ and $Y:=X_{N}$. By  by [\cite{gh10}, Theorem 1] $Y \cong C_{3p}[2K_1]$ with $|{\rm Aut}(Y)|=2^{3p+1}.3.p$.  Suppose that   $Y_{M}$ be the quotient graph of $Y$ corresponding to the orbits of $M$ on $V(Y)$. Assume that $K_{1}$ be the kernel of $\rm Aut(Y)$ acting on $V(Y)$. \\
\textbf{Subcase a:}  $M$ is a 2-group.

 Let  $M$ is a 2-group. Then $|V(Y_{M})|= 3p$. If  $Y_{M}$ has   valency $4$ then by [\cite{WX1},  Theorem 5], $Y_{M}\cong G(3p,4)$ and $|\rm Aut(Y_{M})|=12p$. Since $(K_{1})_{v}=1$ and $|K_{1}|=|M|=2$, $|\rm Aut(Y)/ K_{1}| \leqslant 12p$ and hence $|\rm Aut(Y)| \leq 48 p$. It is a contradiction because $p > 11$. Suppose now that $Y_{M}$ has   valency $2$. By [\cite{WX1},  Theorem 5], $Y_{M}\cong G(3p,1)$ and $|\rm Aut(Y_{M})|=6p$. Since $|K_{1}|\leq 8$, $|\rm Aut(Y)/ K_{1}| \leqslant 48p$ and hence $2^{3p+1}\leq 16$, a contradiction.\\
\textbf{Subcase b:}   $M$ is a 3-group.

Let  $M$ is a 3-group. By Lemma \ref{degree},  $Y_{M}$ has   valency $4$.  By [\cite{CO}, Table 1], $Y_{M}\cong G(2p,4)$ or G(2,p,2). Assume that  $Y_{M}\cong G(2p,4)$. Since $(K_{1})_{v}=1$, $|\rm Aut(Y)/ K_{1}| \leqslant 8p$ and hence $|\rm Aut(Y)| \leq 24 p$. It is a contradiction  because $p \geq 11$. Suppose that $Y_{M}\cong G(2,p,2)$. Then $|\rm Aut(Y)| \leq 2^{p+1} p$ and hence $3p+1 \leq p+1$, a contradiction.
\\
\textbf{Subcase c:}   $M$ is a p-group.

 Let  $M$ is a $p$-group. Then $M_{v}=1$ for $v \in Y$ and hence $Y_{M}$ has   valency $4$. By sage software \cite{sage}  $Y_{M}$ is octahedron graph $O_6$. Since $|\rm Aut(Y_{M})|=48 $ and $|K_{1}|=p$, we have $|\rm Aut(Y)/ K_{1}| \leqslant 48$. We get a contradiction because $p \geq 11$.

	Now suppose that $N \cong \mathbb{Z}_p \times \mathbb{Z}_p$. If $N$ has an orbit of length $p$ then its stabilizer has an element of order $p$ and by Proposition \ref{4-val-sym}, a contradiction.   Therefore all orbits of $N$ has length $p^2$ and  $|X_N|=6$. Also since $N$ is a normal subgroup of $A$ and acts semiregulary on $V(X)$ we conclude that $X$ is a $\mathbb{Z}_p \times \mathbb{Z}_p$-regular cover of octahedron graph $O_6$. By  [\cite{Koh}, Table1], there is no   $\mathbb{Z}_p \times \mathbb{Z}_p$  cover of octahedron graph $O_6$.

\end{proof}

\end{document}